\documentclass[11pt]{article}
\usepackage[a4paper,margin=1.0in]{geometry}
\usepackage[colorlinks,citecolor=magenta,linkcolor=black]{hyperref}
\pdfpagewidth=\paperwidth \pdfpageheight=\paperheight
\usepackage{amsfonts,amssymb,amsthm,amsmath,eucal,tabu,url}
\usepackage{pgf}
\usepackage{array}
\usepackage{tikz-cd}
\usepackage{pstricks}
\usepackage{pstricks-add}
\usepackage{pgf,tikz}
\usetikzlibrary{automata}
\usetikzlibrary{arrows}
\usepackage{indentfirst}
\pagestyle{myheadings}
\usepackage{tabularx} 

\theoremstyle{plain}
\newtheorem{thm}{Theorem}[section]
\newtheorem{theorem}[thm]{Theorem}

\newtheorem{lemma}[thm]{Lemma}

\newtheorem{corollary}[thm]{Corollary}

\theoremstyle{definition}
\newtheorem{definition}[thm]{Definition}
\newtheorem{remark}[thm]{Remark}
\newtheorem{example}[thm]{Example}

\newtheorem{thevarthm}[thm]{\varthmname}

\newenvironment{varthm*}[1]{\trivlist\item[]{\bf #1.}\it}{\endtrivlist}
\def\keywordname{{\bfseries Keywords}}%
\def\keywords#1{\par\addvspace\medskipamount{\rightskip=0pt plus1cm
\def\and{\ifhmode\unskip\nobreak\fi\ $\cdot$
}\noindent\keywordname\enspace\ignorespaces#1\par}}
\def\subclassname{{\bfseries Mathematics Subject Classification
(2020)}\enspace}
\def\subclass#1{\par\addvspace\medskipamount{\rightskip=0pt plus1cm
\def\and{\ifhmode\unskip\nobreak\fi\ $\cdot$
}\noindent\subclassname\ignorespaces#1\par}}

\begin{document}
\title{On the addition technique for Betti and Poincar\'e polynomials of plane curves}
\author{Piotr Pokora}
\date{\today}
\maketitle

\thispagestyle{empty}
\begin{abstract}
Using the addition technique, we present polynomial identities for the Betti and Poincar\'e polynomials of reduced plane curves.
\keywords{reduced curves, quasi-homogeneous singularities, conic-line arrangements}
\subclass{14H50, 32S25, 14N10}
\end{abstract}
\section{Introduction}
In this paper we study Poincar\'e polynomials for plane curves with quasi-homogeneous singularities in the context of the addition technique. Our study is motivated by a classical result from the theory of central hyperplane arrangements in $\mathbb{K}^{n}$, where $\mathbb{K}$ is an arbitrary field. We present here a brief historical outline. Let $V = \mathbb{K}^{n}$ be a vector space, a hyperplane $H$ in $V$ is a linear subspace of dimension $n-1$. An arrangement of hyperplanes $\mathcal{A}$ is a finite set of hyperplanes in $V$. We denote by $L = L(\mathcal{A})$ the set of intersections of hyperplanes in $\mathcal{A}$ that is partially ordered by the reverse inclusion. We define a rank function for the elements in $L(\mathcal{A})$, namely for $X \in L(\mathcal{A})$ one has
$$r(X) = n - {\rm dim} \, X.$$
Let $\mu : L(\mathcal{A}) \times L(\mathcal{A}) \rightarrow \mathbb{Z}$ be the M\"obius function of $L$ and for $X\in L(\mathcal{A})$  we define $\mu(X) : = \mu(V,X)$. A Poincar\'e polynomial of $\mathcal{A}$ is defined by
$$\pi(\mathcal{A};t) = \sum_{X \in L(\mathcal{A})}\mu(X)(-t)^{r(X)}.$$
It is well-known that the Poincar\'e polynomial is a degree $r(\mathcal{A})= \max_{X\in L(\mathcal{A})} r(X)$ polynomial in $t$ with non-negative coefficients. For a hyperplane $H_{0} \in \mathcal{A}$ we define a triple of the form $(\mathcal{A}, \mathcal{A}', \mathcal{A}'')$, where $A' = \mathcal{A} \setminus \{H_{0}\}$ and $\mathcal{A}'' = \{H \cap H_{0} \neq \emptyset \, : \, H \neq H_{0} \text{ and } H \in \mathcal{A}'\}.$
\begin{theorem}
For a triple $(\mathcal{A},\mathcal{A}', \mathcal{A}'')$ one has the following relation:
$$\pi(\mathcal{A};t) = \pi(\mathcal{A}';t) + t \pi(\mathcal{A}'';t).$$
\end{theorem}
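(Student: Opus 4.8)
The plan is to bypass the poset manipulations and instead expand everything as a single signed sum over subarrangements, where the deletion--restriction split becomes transparent. The engine is a Whitney-type expansion of the M\"obius function: for each $X \in L(\mathcal{A})$ one has $\mu(X) = \sum_{\mathcal{B}}(-1)^{|\mathcal{B}|}$, the sum running over all $\mathcal{B} \subseteq \mathcal{A}$ whose intersection $\bigcap \mathcal{B}$ equals $X$ (here $\bigcap \emptyset = V$ is the unique subarrangement with intersection $V$, so that $\mu(V)=1$ is recovered). Since every subarrangement of a central arrangement has nonempty intersection lying in $L(\mathcal{A})$, substituting this into the definition of $\pi$ and summing over $X$ collapses the two sums into one, yielding
\[
\pi(\mathcal{A};t) = \sum_{\mathcal{B} \subseteq \mathcal{A}} (-1)^{|\mathcal{B}|}(-t)^{\,r(\bigcap \mathcal{B})}, \qquad r\Big(\bigcap\mathcal{B}\Big) = n - \dim \bigcap\mathcal{B}.
\]

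To justify the expansion I would verify the equivalent statement $\sum_{Y \le X}\big(\sum_{\bigcap \mathcal{B} = Y}(-1)^{|\mathcal{B}|}\big) = \delta_{X,V}$ and apply M\"obius inversion on $L(\mathcal{A})$. The left-hand side collects all $\mathcal{B}$ with $\bigcap\mathcal{B} \le X$; in the reverse-inclusion order this means $\bigcap\mathcal{B} \supseteq X$, equivalently that $\mathcal{B}$ is an arbitrary subset of $\mathcal{A}_X := \{H \in \mathcal{A} : H \supseteq X\}$. Hence the total equals $(1-1)^{|\mathcal{A}_X|}$, which vanishes unless $\mathcal{A}_X = \emptyset$, i.e.\ unless $X = V$, giving the required Kronecker delta.

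With the expansion in hand, the relation drops out of splitting the subarrangements according to whether they contain $H_0$. The $\mathcal{B}$ with $H_0 \notin \mathcal{B}$ are exactly the subsets of $\mathcal{A}'$, and by the same expansion applied to $\mathcal{A}'$ (codimension being computed in the common ambient space $V$) they reproduce $\pi(\mathcal{A}';t)$. For $\mathcal{B} \ni H_0$, write $\mathcal{B} = \{H_0\}\cup\mathcal{C}$ with $\mathcal{C}\subseteq\mathcal{A}'$ and push $\mathcal{C}$ forward to $\mathcal{D} := \{H\cap H_0 : H\in\mathcal{C}\}\subseteq\mathcal{A}''$, using $H_0\cap\bigcap\mathcal{C} = \bigcap\mathcal{D}$. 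Because $\mathcal{A}''$ lives in $H_0 \cong \mathbb{K}^{n-1}$, its rank function obeys $r''(\bigcap\mathcal{D}) = r(\bigcap\mathcal{D}) - 1$, contributing one extra factor of $(-t)$, which together with the sign $(-1)$ coming from $H_0$ produces the sought factor $t$, since $-(-t)=t$.

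The one genuine subtlety --- and the step I expect to be the crux --- is that the pushforward $\mathcal{C}\mapsto\mathcal{D}$ is \emph{not} injective: distinct hyperplanes of $\mathcal{A}'$ may cut the same trace on $H_0$. To see that the signs still cooperate, I would fix a target $\mathcal{D}=\{K_1,\dots,K_m\}$, group the hyperplanes of $\mathcal{A}'$ by their trace on $H_0$, and observe that the $\mathcal{C}$ mapping onto $\mathcal{D}$ are precisely those selecting a nonempty subset from each of the $m$ relevant groups and nothing from the others. The elementary identity $\sum_{\emptyset\neq S\subseteq G}(-1)^{|S|} = -1$, valid for each nonempty group $G$, then gives $\sum_{\mathcal{C}\mapsto\mathcal{D}}(-1)^{|\mathcal{C}|} = (-1)^{m} = (-1)^{|\mathcal{D}|}$. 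Thus the fibrewise sum reassembles exactly the Whitney expansion of $\pi(\mathcal{A}'';t)$, and collecting the two parts yields $\pi(\mathcal{A};t)=\pi(\mathcal{A}';t)+t\,\pi(\mathcal{A}'';t)$.
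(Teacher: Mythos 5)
Your proof is correct, but note that there is nothing in the paper to compare it against: the paper states this deletion--restriction theorem in the introduction as a classical fact about central hyperplane arrangements and gives no proof, deferring implicitly to the standard literature. What you have reconstructed is, in essence, the classical Whitney-expansion argument (as in Orlik--Terao). Both of your key steps are sound. First, the expansion $\mu(X)=\sum_{\bigcap\mathcal{B}=X}(-1)^{|\mathcal{B}|}$ is correctly justified: the function $g(Y)=\sum_{\bigcap\mathcal{B}=Y}(-1)^{|\mathcal{B}|}$ satisfies $\sum_{Y\le X}g(Y)=\sum_{\mathcal{B}\subseteq\mathcal{A}_X}(-1)^{|\mathcal{B}|}=(1-1)^{|\mathcal{A}_X|}=\delta_{X,V}$, which is exactly the recursion defining $\mu(V,\cdot)$, and centrality enters precisely where you invoke it, to ensure $\bigcap\mathcal{B}$ is nonempty and lies in $L(\mathcal{A})$ so that the double sum collapses. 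Second, you correctly identified the one delicate point, the non-injectivity of the trace map $\mathcal{C}\mapsto\mathcal{D}$, and your resolution is right: the fiber over $\mathcal{D}=\{K_1,\dots,K_m\}$ consists of choices of a nonempty subset from each group $G_i=\{H\in\mathcal{A}':H\cap H_0=K_i\}$, the quantity $H_0\cap\bigcap\mathcal{C}=\bigcap\mathcal{D}$ is constant along the fiber, and $\prod_{i=1}^m\bigl(\sum_{\emptyset\neq S\subseteq G_i}(-1)^{|S|}\bigr)=(-1)^m$ reproduces the sign $(-1)^{|\mathcal{D}|}$ needed for the Whitney expansion of $\pi(\mathcal{A}'';t)$; combined with the rank shift $r(\bigcap\mathcal{B})=r''(\bigcap\mathcal{D})+1$ and the sign $-1$ contributed by $H_0$, this yields the factor $t$ exactly as you claim. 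So the argument is complete; the only caveat worth recording is that the equality $\mathcal{A}''=\{H\cap H_0: H\in\mathcal{A}'\}$ (with repeated traces identified), which your fiber argument presupposes, is indeed what the paper's definition of the restriction means, since in a central arrangement all intersections $H\cap H_0$ are automatically nonempty.
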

Moreover, if we restrict our attention to the case $\mathbb{K} = \mathbb{C}$, then the Poincar\'e polynomial allows us to understand the topology of the complement of $\mathcal{A}$ in the projectivized situation. Motivated by the above classical result, we would like to understand properties of Poincar\'e polynomials of plane curves under the so-called addition technique. The concept of the Poincar\'e polynomial of a reduced plane curve $C$ of degree $d$ has been very recently introduced in \cite{Dimca1, Pokora}, and this polynomial is defined as follows
$$\mathfrak{P}(C;t) = 1 + (d-1)t + ((d-1)^{2} - \tau(C))t^{2},$$
where $\tau(C)$ denotes the total Tjurina number of $C$. It turned out that this polynomial decodes the freeness property and it allows to compute the Euler number of the complement 
$$M(C) := \mathbb{P}^{2}_{\mathbb{C}} \setminus C.$$ It is natural to wonder whether we can say something about the properties of Poincar\'e polynomials once we add curves. The main result of the paper is the following general result that holds under the assumption that our curves have quasi-homogeneous singularities.
\begin{theorem}
Let $C_{1},C_{2} \subset \mathbb{P}^{2}_{\mathbb{C}}$ be two reduced curves such that $C_{1} \cup C_{2}$ admits only quasi-homogeneous singularities. Assume that $C_{1} \cap C_{2}$ is $0$-dimensional consisting of $r$ points, then one has
$$\mathfrak{P}(C_{1} \cup C_{2};t) = \mathfrak{P}(C_{1};t) + \mathfrak{P}(C_{2};t) + t-1 + (r-1)t^{2}.$$
\end{theorem}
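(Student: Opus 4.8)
The plan is to reduce the polynomial identity to a single numerical statement about total Tjurina numbers, and then to prove that statement by localizing at the singular points and invoking the equality of Milnor and Tjurina numbers for quasi-homogeneous germs. First, since $r = C_1 \cap C_2 < \infty$ the curves share no common component, so $C := C_1 \cup C_2$ is reduced of degree $d = d_1 + d_2$, where $d_i = \deg C_i$. Writing out both sides and comparing coefficients, the constant and linear terms agree automatically, since they depend only on the degrees and $(d_1-1)+(d_2-1)+1 = d-1$. Expanding the coefficient of $t^2$ on both sides and cancelling the purely degree-dependent part, the entire theorem becomes equivalent to the total Tjurina identity
\[
\tau(C_1 \cup C_2) = \tau(C_1) + \tau(C_2) + 2 d_1 d_2 - r. \qquad (\star)
\]
This is the arithmetic heart of the statement, and everything below is aimed at $(\star)$.

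Next I would establish the local counterpart of $(\star)$ for Milnor numbers. At a point $p \in C_1 \cap C_2$ with local equations $f,g$ for the two curves, the relation $\mu_p = 2\delta_p - (\#\text{branches}) + 1$, combined with the additivity $\delta_p(fg) = \delta_p(f) + \delta_p(g) + (f,g)_p$ of the delta invariant and additivity of the number of branches, yields
\[
\mu_p(C_1 \cup C_2) = \mu_p(C_1) + \mu_p(C_2) + 2\,(C_1,C_2)_p - 1 .
\]
Summing this over all $p \in C_1 \cap C_2$ and adding the unchanged contributions of the singular points lying on only one of the two curves, the intersection terms sum to $\sum_p (C_1,C_2)_p = d_1 d_2$ by B\'ezout, while the constants contribute $-r$. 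This gives the global Milnor identity $\mu(C) = \mu(C_1) + \mu(C_2) + 2 d_1 d_2 - r$.

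Finally I would pass from Milnor to Tjurina numbers. Since every singularity of $C$ is quasi-homogeneous, K. Saito's theorem gives $\tau_p(C) = \mu_p(C)$ at each singular point, hence $\tau(C) = \mu(C)$; applying the same equality to $C_1$ and $C_2$ converts the global Milnor identity into $(\star)$, completing the proof. The main obstacle is precisely this last conversion for the two components: one must know that $C_1$ and $C_2$ separately have only quasi-homogeneous singularities. At a singular point of $C_i$ disjoint from the other curve the germ of $C_i$ coincides with that of $C$, so quasi-homogeneity is inherited directly; the delicate case is a point of $C_1 \cap C_2$ that is already singular on one component, where one must argue that the sub-germ remains quasi-homogeneous. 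Without this, only the inequality $\tau(C) \ge \tau(C_1) + \tau(C_2) + 2 d_1 d_2 - r$ survives, since $\tau \le \mu$ always holds; thus pinning down the quasi-homogeneity of the components (or carrying it as a hypothesis) is the step that actually controls whether $(\star)$ is an equality.
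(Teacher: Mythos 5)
Your proposal follows the same route as the paper's proof: reduce the polynomial identity to additivity of the total Milnor number, $\mu(C_{1}\cup C_{2})=\mu(C_{1})+\mu(C_{2})+2c_{1}c_{2}-r$ with $c_{i}=\deg C_{i}$, and then convert $\mu$ into $\tau$ using quasi-homogeneity. Your reduction to the identity $(\star)$ is the same elementary computation the paper performs, and your derivation of the local formula $\mu_{p}(C_{1}\cup C_{2})=\mu_{p}(C_{1})+\mu_{p}(C_{2})+2(C_{1},C_{2})_{p}-1$ from Milnor's relation between $\mu_{p}$, $\delta_{p}$ and the number of local branches, together with additivity of $\delta$, is correct and slightly more self-contained than the paper, which simply cites this formula from Wall (Theorem 6.5.1); the summation via B\'ezout matches the paper's Lemma.

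The one genuine gap is the point you flag yourself but do not close: you need the germs of $C_{1}$ and $C_{2}$ at each point of $C_{1}\cap C_{2}$ to be quasi-homogeneous, so that $\tau_{p}(C_{i})=\mu_{p}(C_{i})$ there; you leave this open and even suggest it might have to be imposed as an additional hypothesis. It does not: the fact is true as stated, and it is exactly what the paper invokes by citing Dimca (Remark 2.4 of the paper on plus-one generated curves). You can also prove it directly. Choose local analytic coordinates $(u,v)$ at $p$ in which the local equation of $C_{1}\cup C_{2}$ becomes a unit times a weighted homogeneous polynomial $F$ for some positive weights $(w_{1},w_{2})$. Up to units, the irreducible factors of $F$ in $\mathbb{C}\{u,v\}$ are themselves weighted homogeneous, namely $u$, $v$, or binomials $u^{m}-c\,v^{n}$ with $mw_{1}=nw_{2}$; equivalently, the $\mathbb{C}^{*}$-action $t\cdot(u,v)=(t^{w_{1}}u,t^{w_{2}}v)$ preserves the germ of $F=0$ and, being connected, preserves each branch. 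Since $C_{1}$ and $C_{2}$ share no local branch (their intersection is finite), the local equation of each $C_{i}$ at $p$ is a unit times the product of a sub-collection of these factors, hence weighted homogeneous in the same coordinates; so each germ $(C_{i},p)$ is quasi-homogeneous and Saito's theorem gives $\tau_{p}(C_{i})=\mu_{p}(C_{i})$. With that paragraph inserted, your proof is complete and coincides with the paper's argument.
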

The second result tells us how the Betti polynomial behaves under the addition technique. Let us recall that the Betti polynomial of $M(C)$ is defined as
$$B_{M(C)}(t) = 1 + (e-1)t + ((d-1)^{2} - \mu(C) - d+e)t^{2},$$
see Section $3$ for further explanations.
\begin{theorem}
Let $C_{1},C_{2} \subset \mathbb{P}^{2}_{\mathbb{C}}$ be two reduced curves such that $C_{1} \cap C_{2}$ is $0$-dimensional consisting of $r$ points. Then one has
    $$B_{M(C_{1} \cup C_{2})}(t) = B_{M(C_{1})}(t) + B_{M(C_{2})}(t) +t-1+(r-1)t^{2}.$$
\end{theorem}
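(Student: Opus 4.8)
The plan is to reduce the asserted identity to a coefficientwise comparison of the two sides and to observe that the only nontrivial input is a single additivity formula for the total Milnor number under union. Write $d_i = \deg C_i$, let $e_i$ be the number of irreducible components of $C_i$, and set $\mu_i = \mu(C_i)$ for $i=1,2$; put $d = d_1+d_2$, $e = e_1+e_2$, and $\mu = \mu(C_1\cup C_2)$. The hypothesis $r = C_1 \cap C_2 < \infty$ forces $C_1$ and $C_2$ to share no common component, so the irreducible components of $C_1\cup C_2$ are exactly those of $C_1$ together with those of $C_2$; hence $e = e_1+e_2$ and $\deg(C_1\cup C_2)=d$. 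Comparing constant terms gives $1 = 1+1-1$, and comparing coefficients of $t$ gives $e-1 = (e_1-1)+(e_2-1)+1$, both of which hold automatically. Thus everything comes down to the coefficient of $t^2$.

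The key step is the additivity relation
\begin{equation*}
\mu(C_1\cup C_2) = \mu(C_1) + \mu(C_2) + 2 d_1 d_2 - r.
\end{equation*}
To establish it I would argue locally at each point $p \in C_1\cap C_2$. There I would use Milnor's formula $\mu_p = 2\delta_p - b_p + 1$, where $\delta_p$ is the delta-invariant and $b_p$ the number of local branches, together with the classical additivity of the delta-invariant, $\delta_p(C_1\cup C_2) = \delta_p(C_1) + \delta_p(C_2) + (C_1\cdot C_2)_p$ (valid since $C_1,C_2$ have no common branch at $p$). As the number of local branches of the union equals $b_p(C_1)+b_p(C_2)$, combining these yields the local identity
\begin{equation*}
\mu_p(C_1\cup C_2) = \mu_p(C_1) + \mu_p(C_2) + 2(C_1\cdot C_2)_p - 1.
\end{equation*}

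Next I would pass from the local to the global statement. The singular points of $C_1\cup C_2$ are the singular points of $C_1$ lying off $C_2$, the singular points of $C_2$ lying off $C_1$, and every point of $C_1\cap C_2$ (each of which is singular for the union irrespective of whether $C_1$ or $C_2$ is smooth there). Summing local Milnor numbers over these three types and using that $\mu_p(C_1)=0$ at a smooth point of $C_1$ (and likewise for $C_2$), the branch-terms reassemble into $\mu(C_1)+\mu(C_2)$, leaving the correction $\sum_{p\in C_1\cap C_2}\bigl(2(C_1\cdot C_2)_p - 1\bigr)$. Bézout's theorem gives $\sum_p (C_1\cdot C_2)_p = d_1 d_2$ over exactly $r$ points, which produces the displayed additivity relation.

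Finally I would substitute $d = d_1+d_2$, $e = e_1+e_2$, and the additivity relation for $\mu$ into the definition of $B_{M(\cdot)}(t)$ and expand. The left-hand coefficient of $t^2$, namely $(d-1)^2 - \mu - d + e$, then matches $[(d_1-1)^2 - \mu_1 - d_1 + e_1] + [(d_2-1)^2 - \mu_2 - d_2 + e_2] + (r-1)$, because the cross term $2 d_1 d_2$ arising from $(d_1+d_2-1)^2$ cancels against the $-2 d_1 d_2$ coming from $-\mu$; this last verification is a routine expansion. I expect the only genuine obstacle to be the Milnor additivity formula, i.e.\ the local computation above; note that it is purely topological and in particular requires no quasi-homogeneity hypothesis, which is precisely why this Betti identity holds in greater generality than the Poincar\'e identity of the preceding theorem.
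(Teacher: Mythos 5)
Your proposal is correct and follows essentially the same route as the paper: both reduce the identity to the additivity relation $\mu(C_{1}\cup C_{2})=\mu(C_{1})+\mu(C_{2})+2d_{1}d_{2}-r$, obtained by summing the local formula $\mu_{p}(C_{1}\cup C_{2})=\mu_{p}(C_{1})+\mu_{p}(C_{2})+2(C_{1}\cdot C_{2})_{p}-1$ over the intersection points and applying B\'ezout, and then verify the coefficient of $t^{2}$ by direct expansion. The only difference is that where the paper simply cites Wall's Theorem 6.5.1 for the local formula, you re-derive it from Milnor's relation $\mu_{p}=2\delta_{p}-b_{p}+1$ and the additivity of the delta invariant, which is a correct (and standard) justification of the same fact.
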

Let us give an outline of our paper. In Section $2$, we recall some basics about free plane curves and their Poincar\'e polynomials. In Section $3$, we give our proofs of the above two results. We work exclusively over the complex numbers.
\section{Preliminaries}
We follow the notation introduced in \cite{Dimca}. Let us denote by $S := \mathbb{C}[x,y,z]$ the coordinate ring of $\mathbb{P}^{2}_{\mathbb{C}}$. For a homogeneous polynomial $f \in S$ let us denote by $J_{f}$ the Jacobian ideal associated with $f$, i.e., the ideal $J_{f} = \langle \partial_{x}\, f, \partial_{y} \, f, \partial_{z} \, f \rangle$.
\begin{definition}
Let $p$ be an isolated singularity of a polynomial $f\in \mathbb{C}[x,y]$. Since we can change the local coordinates, assume that $p=(0,0)$.
\begin{itemize}
    
\item The number 
$$\mu_{p}=\dim_\mathbb{C}\left(\mathbb{C}\{x,y\} /\bigg\langle \partial_{x}\, f ,\partial_{y}\, f \bigg\rangle\right)$$
is called the Milnor number of $f$ at $p$.

\item The number
$$\tau_{p}=\dim_\mathbb{C}\left(\mathbb{C}\{x,y\}/\bigg\langle f, \partial_{x}\, f ,\partial_{y} \, f \bigg\rangle \right)$$
is called the Tjurina number of $f$ at $p$.

\end{itemize}
\end{definition}

The total Tjurina number of a given reduced curve $C \subset \mathbb{P}^{2}_{\mathbb{C}}$ is defined as
$${\rm deg}(J_{f}) = \tau(C) = \sum_{p \in {\rm Sing}(C)} \tau_{p}.$$ 

Recall that a singularity is called quasi-homogeneous if and only if there exists a holomorphic change of variables so that the defining equation becomes weighted homogeneous. If $C : f=0$ is a reduced plane curve with only quasi-homogeneous singularities, then one has $\tau_{p}=\mu_{p}$ for all $p \in {\rm Sing}(C)$, and eventually 
$$\tau(C) = \sum_{p \in {\rm Sing}(C)} \mu_{p} = \mu(C),$$
which means that the total Tjurina number is equal to the total Milnor number of $C$.

Next, we will need an important invariant that is defined using the syzygies associated with the Jacobian ideal $J_{f}$.
\begin{definition}
Consider the graded $S$-module of Jacobian syzygies of $f$, namely $$AR(f)=\{(a,b,c)\in S^3 : a\partial_{x} \, f + b \partial_{y} \, f + c \partial_{z} \, f = 0 \}.$$
The minimal degree of non-trivial Jacobian relations for $f$ is defined to be 
$${\rm mdr}(f):=\min_{r\geq 0}\{AR(f)_r\neq 0\}.$$ 
\end{definition}
\begin{remark}
If $C: f=0$ is a reduced plane curve in $\mathbb{P}^{2}_{\mathbb{C}}$, then we write ${\rm mdr}(f)$ or ${\rm mdr}(C)$ interchangeably.
\end{remark}
Let us now formally define the freeness of a reduced plane curve that was formally introduced in \cite{KS1}.
\begin{definition}
A reduced curve $C \subset \mathbb{P}^{2}_{\mathbb{C}}$ of degree $d$ is free if the Jacobian ideal $J_{f}$ is saturated with respect to $\mathfrak{m} = \langle x,y,z\rangle$. Moreover, if $C$ is free, then the pair $(d_{1}, d_{2}) = ({\rm mdr}(f), d - 1 - {\rm mdr}(f))$ is called the exponents of $C$.
\end{definition}
It is notoriously difficult to check the freeness property according to the above definition. However, we can use the following result, which provides an effective criterion \cite{duP}.
\begin{theorem}[du Plessis--Wall]
\label{dddp}
Let $C : f=0$ be a reduced curve in $\mathbb{P}^{2}_{\mathbb{C}}$. One has
\begin{equation*}
\label{duPles}
(d-1)^{2} - d_{1}(d-d_{1}-1) = \tau(C)
\end{equation*}
if and only if $C : f=0 $ is a free curve, and then ${\rm mdr}(f) = d_{1} \leq (d-1)/2$.
\end{theorem}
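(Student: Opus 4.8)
The plan is to recast the whole statement in terms of a rank-two vector bundle on $\mathbb{P}^{2}_{\mathbb{C}}$ attached to $f$, and then to extract both the inequality concealed in the formula and its equality case from a single Koszul-type exact sequence. First I would sheafify the graded module $AR(f)$. As $AR(f)=\ker(S^{3}\xrightarrow{\nabla}S(d-1))$ is a second syzygy module it is reflexive, hence saturated (so $AR(f)_{k}=H^{0}(\mathbb{P}^{2},\mathcal{E}(k))$ for all $k$), and on the smooth surface $\mathbb{P}^{2}_{\mathbb{C}}$ a reflexive sheaf is locally free; thus $\mathcal{E}:=\widetilde{AR(f)}$ is a rank-two vector bundle. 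The dictionary I would set up is: $r:={\rm mdr}(f)=\min\{k:H^{0}(\mathcal{E}(k))\neq 0\}$, and $C$ is free if and only if $\mathcal{E}$ splits as a sum of two line bundles, whose twists are then the exponents. A standard Chern class computation, read off from the four-term exact sequence $0\to AR(f)\to S^{3}\to S(d-1)\to M(f)(d-1)\to 0$ together with $\deg(\mathrm{Sing}(C))=\tau(C)$, gives $c_{1}(\mathcal{E})=-(d-1)$ and $c_{2}(\mathcal{E})=(d-1)^{2}-\tau(C)$. This reduces the theorem to a statement about sections of $\mathcal{E}$.

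Next I would exploit a minimal section. Choosing $0\neq s\in H^{0}(\mathcal{E}(r))$, corresponding to a syzygy of minimal degree, I would argue that the zero locus $Z:=Z(s)$ is $0$-dimensional: were $s$ to vanish along a divisor $D$, dividing out a defining section of $D$ would yield a section of $\mathcal{E}(r-\deg D)$, i.e.\ a syzygy of degree $<r$, contradicting $r={\rm mdr}(f)$. Hence $s$ produces the Koszul sequence
$$0\to \mathcal{O}(-r)\xrightarrow{\ s\ }\mathcal{E}\to \mathcal{I}_{Z}(r-(d-1))\to 0,$$
with $\mathcal{I}_{Z}$ the ideal sheaf of $Z$. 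Comparing second Chern classes gives $c_{2}(\mathcal{E})=r(d-1-r)+\deg Z$, that is
$$\tau(C)=(d-1)^{2}-r(d-1-r)-\deg Z.$$
Since $\deg Z\geq 0$, this already yields the du Plessis--Wall inequality $\tau(C)\leq (d-1)^{2}-r(d-r-1)$, and it shows that the equality in the theorem holds exactly when $Z=\emptyset$.

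It then remains to analyse the equality case. If $\deg Z=0$ the quotient is locally free, so $\mathcal{E}$ sits in $0\to\mathcal{O}(-r)\to\mathcal{E}\to\mathcal{O}(r-(d-1))\to 0$. This extension is classified by $\mathrm{Ext}^{1}(\mathcal{O}(r-(d-1)),\mathcal{O}(-r))=H^{1}(\mathbb{P}^{2},\mathcal{O}(d-1-2r))$, which vanishes since every line bundle on $\mathbb{P}^{2}$ has trivial $H^{1}$. Hence the sequence splits, $\mathcal{E}\cong\mathcal{O}(-r)\oplus\mathcal{O}(-(d-1-r))$, and by the dictionary $C$ is free with exponents $(r,d-1-r)$; in particular $r\leq d-1-r$, i.e.\ $r\leq (d-1)/2$. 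Conversely, if $C$ is free then $\mathcal{E}=\mathcal{O}(-d_{1})\oplus\mathcal{O}(-d_{2})$ with $d_{1}+d_{2}=d-1$ and $r=\min(d_{1},d_{2})$, and substituting $c_{2}(\mathcal{E})=d_{1}d_{2}=r(d-1-r)$ into the formula for $\tau$ recovers the asserted equality. This closes both implications.

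The step I expect to carry the genuine content, rather than being bookkeeping, is the Chern class identity $c_{2}(\mathcal{E})=(d-1)^{2}-\tau(C)$: it is what ties the purely algebraic invariant $\tau(C)$ to the geometry of $\mathcal{E}$, and hence what makes the inequality $\deg Z\geq 0$ say anything about singularities. The accompanying delicate point is the structural verification that $\mathcal{E}$ is locally free and that $AR(f)_{k}=H^{0}(\mathcal{E}(k))$, i.e.\ that the reflexive module $AR(f)$ is saturated. Once these facts are secured, the minimality argument pinning down $\dim Z=0$ together with the vanishing of $H^{1}$ of line bundles on $\mathbb{P}^{2}$ make the equality case essentially automatic; notably, the bound $r\leq(d-1)/2$ is not an extra hypothesis but emerges as the ordering of the two exponents.
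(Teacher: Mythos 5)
Your proposal is essentially correct, but there is nothing in the paper to compare it with: the paper does not prove Theorem \ref{dddp} at all, it imports it as a black box from \cite{duP}. The original du Plessis--Wall argument is of a quite different nature --- it comes from the theory of discriminants and deformations of plane curve singularities, and in that form it yields more than what is quoted here, namely the two-sided bound $(d-1)(d-r-1)\le\tau(C)\le(d-1)^{2}-r(d-r-1)$ together with a refined upper bound when $r\ge d/2$. Your route is the modern syzygy-bundle proof (the one standard in Dimca's school, which is also the natural one given how this paper is set up, since it works directly with $AR(f)$ and ${\rm mdr}(f)$): sheafify $AR(f)$ to a rank-two bundle $\mathcal{E}$ with $c_{1}=-(d-1)$, $c_{2}=(d-1)^{2}-\tau(C)$, take the Koszul sequence of a minimal section, read off $\tau(C)=(d-1)^{2}-r(d-1-r)-\deg Z$, and settle the equality case by $H^{1}(\mathbb{P}^{2},\mathcal{O}(k))=0$. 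This buys a short, uniform treatment of the inequality and its equality case, and makes the bound $r\le(d-1)/2$ transparent as the ordering of the exponents; what it does not give without extra work is the du Plessis--Wall lower bound, which your Koszul sequence says nothing about (and which the paper fortunately does not use). Your individual steps check out: the depth/reflexivity argument for $AR(f)_{k}=H^{0}(\mathcal{E}(k))$, the divisorial-part argument for $\dim Z=0$, and both Chern class computations are correct. The one place where you lean on an unproved assertion is the dictionary itself: the paper defines freeness as saturation of $J_{f}$, so your equivalence ``$C$ free $\Leftrightarrow$ $\mathcal{E}$ splits'' requires the standard intermediate equivalence with freeness of the graded module $AR(f)$ (via $N(f)=J_{f}^{\rm sat}/J_{f}$ and the identification $AR(f)=\bigoplus_{k}H^{0}(\mathcal{E}(k))$ you established); this is well documented in the literature, so it is a citation gap rather than a mathematical one.
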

From the above result we have the following important corollary.
\begin{corollary}
If $C : f=0$ be a reduced free curve in $\mathbb{P}^{2}_{\mathbb{C}}$ of degree $d$ with exponents $(d_{1},d_{2})$, then
$$d_{1}d_{2} = (d-1)^{2} - \tau(C).$$
\end{corollary}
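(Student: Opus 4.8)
The plan is to invoke Theorem~\ref{dddp} directly and then match the notation. Since $C : f = 0$ is assumed to be free, the du Plessis--Wall theorem applies in its ``then'' direction: it tells us that ${\rm mdr}(f) = r$ with $r \leq (d-1)/2$, and that the identity
$$(d-1)^{2} - r(d-r-1) = \tau(C)$$
holds. The first thing I would do is record that, by the definition of the exponents of a free curve, we have $d_{1} = {\rm mdr}(f)$ and $d_{2} = d - 1 - {\rm mdr}(f)$, so that $d_{1} = r$ and $d_{2} = d - 1 - r$.

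The remaining step is a one-line computation. Multiplying the exponents gives
$$d_{1} d_{2} = r(d - 1 - r) = r(d - r - 1),$$
while rearranging the du Plessis--Wall identity yields
$$(d-1)^{2} - \tau(C) = r(d - r - 1).$$
Comparing the two right-hand sides produces $d_{1} d_{2} = (d-1)^{2} - \tau(C)$, as desired.

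The only point that requires care---rather than being a genuine obstacle---is the identification of the integer $r$ appearing in Theorem~\ref{dddp} with the exponent $d_{1} = {\rm mdr}(f)$. This is guaranteed precisely by the ``then'' clause of the theorem, which asserts ${\rm mdr}(f) = r$ for a free curve, together with the bound $r \leq (d-1)/2$ that makes the exponents well defined. Once this identification is in place no further input is needed, since the claimed equality is simply a reformulation of the du Plessis--Wall equality.
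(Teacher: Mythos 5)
Your proof is correct and follows exactly the route the paper intends: the corollary is stated there as an immediate consequence of Theorem~\ref{dddp}, obtained by setting $d_{1} = {\rm mdr}(f) = r$, $d_{2} = d-1-r$, and rearranging the du Plessis--Wall identity $(d-1)^{2} - r(d-r-1) = \tau(C)$. Your explicit care about identifying $r$ with $d_{1}$ via the ``then'' clause is precisely the step the paper leaves implicit, so there is nothing to add.
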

Let us now define the main object of our considerations, namely the Poincar\'e polynomial.
\begin{definition}
Let $C : f=0$ be a reduced curve in $\mathbb{P}^{2}_{\mathbb{C}}$. Then its Poincar\'e polynomial is defined as
$$\mathfrak{P}(C;t) = 1 + (d-1)t + ((d-1)^{2} - \tau(C))t^{2}.$$
\end{definition}

\begin{theorem}
If $C : f=0$ be a reduced free curve in $\mathbb{P}^{2}_{\mathbb{C}}$ of degree $d$ with exponents $(d_{1},d_{2})$, then its Poincar\'e polynomial splits over the rationals as
$$\mathfrak{P}(C;t) = (1+d_{1}t)(1+d_{2}t).$$
\end{theorem}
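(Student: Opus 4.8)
The plan is to expand the product $(1+d_1 t)(1+d_2 t)$ and to match its coefficients against the definition of the Poincar\'e polynomial, feeding in the two structural facts already established for free curves. Expanding gives
$$(1+d_1 t)(1+d_2 t) = 1 + (d_1+d_2)\,t + d_1 d_2\,t^{2},$$
so the whole argument reduces to identifying the linear coefficient $d_1+d_2$ with $d-1$ and the quadratic coefficient $d_1 d_2$ with $(d-1)^2 - \tau(C)$.

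For the linear term, I would invoke the definition of the exponents of a free curve, namely $(d_1,d_2) = ({\rm mdr}(f),\, d-1-{\rm mdr}(f))$. Adding the two entries immediately yields $d_1 + d_2 = d-1$, which is precisely the coefficient of $t$ in $\mathfrak{P}(C;t)$.

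For the quadratic term, the relevant input is the Corollary following the du Plessis--Wall theorem, which asserts that a free curve of degree $d$ with exponents $(d_1,d_2)$ satisfies $d_1 d_2 = (d-1)^2 - \tau(C)$. This matches the coefficient of $t^2$ in the definition of $\mathfrak{P}(C;t)$. Substituting both identities back into the expansion, I would conclude
$$(1+d_1 t)(1+d_2 t) = 1 + (d-1)t + \big((d-1)^2-\tau(C)\big)t^{2} = \mathfrak{P}(C;t),$$
and since $d_1,d_2$ are non-negative integers the factorization is genuinely over the rationals (in fact over the integers).

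There is no real obstacle here: the statement is a direct algebraic consequence of the definition of the exponents together with the Corollary, so the only point requiring care is that the two coefficient identities are quoted correctly and that the roles of the sum $d_1+d_2$ and the product $d_1 d_2$ are not interchanged.
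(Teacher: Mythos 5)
Your proposal is correct and follows essentially the same argument as the paper: both rest on the two identities $d_{1}+d_{2}=d-1$ (from the definition of the exponents) and $d_{1}d_{2}=(d-1)^{2}-\tau(C)$ (from the corollary to the du Plessis--Wall theorem), then match coefficients. The only cosmetic difference is the direction of the computation --- you expand the product and identify it with $\mathfrak{P}(C;t)$, while the paper rewrites $\mathfrak{P}(C;t)$ and factors it --- which is the same proof.
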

\begin{proof}
Recall that the freeness of $C$ implies that $d_{1}+d_{2} = d-1$ and $d_{1}d_{2} = (d-1)^{2} - 
\tau(C)$, hence
$$\mathfrak{P}(C;t) = 1 + (d-1)t + ((d-1)^{2} - \tau(C))t^{2} = 1 + (d_{1}+d_{2})t + d_{1}d_{2}t^{2} = (1+d_{1}t)(1+d_{2}t),$$
which completes the proof.
\end{proof}

\section{Proofs of the main results}
In order to show our main result we need a tiny preparation.
For a reduced plane curve $C$ of degree $d\geq 3$ its complement is defined as $M(C) = \mathbb{P}^{2}_{\mathbb{C}} \setminus C$. Recall that the Betti polynomial of $M(C)$ has the following presentation
\begin{equation*}
B_{M(C)}(t) = 1+b_{1}(M(C))t + b_{2}(M(C))t^{2} =  1 + (e-1)t + ((d-1)^{2} - \mu(C) - d+e)t^{2},  
\end{equation*}
where $\mu(C)$ denotes the total Milnor number of $C$ and $e$ is the number of irreducible components of $C$, see \cite[Theorem 3.1]{Dimca1} for all necessary details.

Let $C_{i} \subset \mathbb{P}^{2}_{\mathbb{C}}$ with $i=1,2$ be two reduced curves such that ${\rm deg}\, C_{i} = c_{i}$ and $C_{i}$ has exactly $e_{i}\geq 1$ irreducible components. We assume that $C_{1} \cap C_{2}$ is $0$-dimensional and consists of $r < \infty$ points. We use the notation $\mu(C,p)$ meaning the Milnor number of $C$ at its singular point $p \in C$. We will need the following crucial lemma that explains the behaviour of the total Milnor number under the addition of two curves.
\begin{lemma}
In the setting as above, one has
$$\mu(C_{1}\cup C_{2}) = \mu(C_{1}) + \mu(C_{2}) + 2c_{1}c_{2}-r.$$
\end{lemma}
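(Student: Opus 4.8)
The plan is to reduce the global identity to a local one, using that the total Milnor number is a sum of local contributions, $\mu(C_1 \cup C_2) = \sum_{p} \mu(C_1 \cup C_2, p)$ over $p \in {\rm Sing}(C_1 \cup C_2)$. First I would observe that the singular locus of the union consists of the singular points of $C_1$ lying off $C_2$, the singular points of $C_2$ lying off $C_1$, and the $r$ points of $C_1 \cap C_2$. Away from $C_1 \cap C_2$ one of the two local equations is a unit, so the local Milnor number of $C_1 \cup C_2$ there equals that of the single curve passing through the point; hence the only genuine interaction happens at the intersection points.

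The heart of the argument is a local product formula at each $p \in C_1 \cap C_2$. Writing $C_i : f_i = 0$ locally, I claim
$$\mu(C_1 \cup C_2, p) = \mu(C_1, p) + \mu(C_2, p) + 2\,I_p(C_1, C_2) - 1,$$
where $I_p(C_1, C_2) = \dim_{\mathbb{C}}\mathbb{C}\{x,y\}/\langle f_1, f_2\rangle$ is the local intersection multiplicity. I would deduce this from two classical facts about reduced plane curve germs: Milnor's formula $\mu = 2\delta - b + 1$ relating the Milnor number to the delta-invariant $\delta$ and the number of local branches $b$, and the additivity $\delta_p(C_1 \cup C_2) = \delta_p(C_1) + \delta_p(C_2) + I_p(C_1, C_2)$, which holds precisely because $C_1$ and $C_2$ share no branch at $p$ (their intersection being zero-dimensional). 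Since the branches of the union are those of $C_1$ together with those of $C_2$, substituting Milnor's formula into each of the three germs and cancelling the $+1$ terms yields the displayed identity.

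With the local formula established, I would sum it over $p \in C_1 \cap C_2$ and reassemble the pieces. The terms $\sum_{p \in C_1 \cap C_2} \mu(C_i, p)$ combine with the singular points of $C_i$ lying off the other curve to recover the full invariant $\mu(C_i)$ (the extra points contribute zero, as $\mu(C_i, p) = 0$ wherever $C_i$ is smooth). This leaves the residual sum $\sum_{p \in C_1 \cap C_2}\bigl(2\,I_p(C_1, C_2) - 1\bigr)$, and here B\'ezout's theorem gives $\sum_{p \in C_1 \cap C_2} I_p(C_1, C_2) = c_1 c_2$ while the number of summands is $r$ by hypothesis. The residual sum therefore equals $2c_1 c_2 - r$, which is exactly the claimed correction term.

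I expect the main obstacle to be the local product formula, specifically the additivity of the delta-invariant across $C_1$ and $C_2$. This is standard for germs with no common branch but should be invoked (or derived via the normalization) carefully, since it is where the hypothesis that $C_1 \cap C_2$ is zero-dimensional does its essential work. Once that additivity and Milnor's $\mu = 2\delta - b + 1$ are granted, everything else is bookkeeping together with a single application of B\'ezout.
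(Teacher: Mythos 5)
Your proposal is correct and takes essentially the same route as the paper: the decomposition of ${\rm Sing}(C_{1}\cup C_{2})$, the local formula $\mu(C_{1}\cup C_{2},p)=\mu(C_{1},p)+\mu(C_{2},p)+2i_{p}(C_{1},C_{2})-1$ at each $p\in C_{1}\cap C_{2}$, summation over those points together with the singular points off the intersection, and B\'ezout's theorem are exactly the paper's steps. The only difference is that the paper cites the local formula from Wall's book (Theorem 6.5.1), whereas you sketch its standard derivation via $\mu=2\delta-b+1$ and the additivity of the delta-invariant for germs without common branches, which is a correct (and indeed the classical) proof of the cited result.
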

\begin{proof}
Observe that for each intersection point $p \in C_{1}\cap C_{2}$ one has
$$\mu(C_{1} \cup C_{2},p) = \mu(C_{1},p) + \mu(C_{2},p) + 2i_{p}(C_{1},C_{2}) - 1,$$
where $i_{p}(C_{1},C_{2})$ denotes the intersection index of curves $C_{1}, C_{2}$ at $p$, see \cite[Theorem 6.5.1]{Wall}. Then the claim follows by summing the above relation over all singular points $p \in C_{1}\cap C_{2}$ and adding Milnor numbers of singular points on $C_{1} \setminus C_{1} \cap C_{2}$ and $C_{2}\setminus C_{1} \cap C_{2}$.
\end{proof}
Now we are ready to present our main result.
\begin{theorem}
In the setting as above, if all singularities of a reduced curve $C_{1} \cup C_{2}\subset \mathbb{P}^{2}_{\mathbb{C}}$ are quasi-homogeneous, then one has
\begin{equation}
\label{addition}
\mathfrak{P}(C_{1} \cup C_{2};t) = \mathfrak{P}(C_{1};t) + \mathfrak{P}(C_{2};t) + t-1 + (r-1)t^{2}.
\end{equation}
\end{theorem}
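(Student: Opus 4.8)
The plan is to turn the claimed identity into a routine comparison of polynomial coefficients, the real work being done by two structural inputs already at our disposal. Since $r = C_1 \cap C_2 < \infty$, the two curves share no component, so $C_1 \cup C_2$ is cut out by the product $f_1 f_2$ and its degree satisfies $d = c_1 + c_2$; moreover the quasi-homogeneity hypothesis lets us replace Tjurina numbers by Milnor numbers via the relation $\tau_p = \mu_p$ recalled in Section~2. First I would observe that every singular point of $C_i$ is also a singular point of $C_1 \cup C_2$, so that the quasi-homogeneity of $C_1 \cup C_2$ forces $\tau(C_i) = \mu(C_i)$ for $i = 1,2$ in addition to $\tau(C_1 \cup C_2) = \mu(C_1 \cup C_2)$. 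This replaces all three Poincar\'e polynomials by expressions in the degrees $c_1, c_2$ and the Milnor numbers $\mu(C_1), \mu(C_2), \mu(C_1 \cup C_2)$.

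Next I would expand the left-hand side. Writing $d - 1 = c_1 + c_2 - 1$ and inserting the crucial Lemma $\mu(C_1 \cup C_2) = \mu(C_1) + \mu(C_2) + 2c_1 c_2 - r$ into the quadratic coefficient $(d-1)^2 - \mu(C_1 \cup C_2)$, the term $2c_1 c_2$ produced by the Lemma cancels exactly the cross term inside the expansion of $(c_1 + c_2 - 1)^2$. This cancellation is the heart of the matter: after it the quadratic coefficient of $\mathfrak{P}(C_1 \cup C_2; t)$ collapses to $c_1^2 + c_2^2 - 2c_1 - 2c_2 + 1 - \mu(C_1) - \mu(C_2) + r$, while the linear coefficient is simply $c_1 + c_2 - 1$ and the constant term is $1$.

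I would then expand the right-hand side $\mathfrak{P}(C_1; t) + \mathfrak{P}(C_2; t) + t - 1 + (r-1)t^2$ and match coefficients degree by degree. The constant terms give $1 + 1 - 1 = 1$; the linear terms give $(c_1 - 1) + (c_2 - 1) + 1 = c_1 + c_2 - 1$; and the quadratic terms give $(c_1 - 1)^2 + (c_2 - 1)^2 - \mu(C_1) - \mu(C_2) + (r - 1)$, which simplifies to precisely the expression found above for the left-hand side. Since all three pairs of coefficients agree, the identity follows.

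The only genuinely delicate point is the inheritance step $\tau(C_i) = \mu(C_i)$ for the components. At a point of $C_1 \cap C_2$ the local germ of $C_i$ is a proper factor of the germ of $C_1 \cup C_2$, so one must be sure that the quasi-homogeneity of the union is indeed inherited by each component there; away from $C_1 \cap C_2$ the germs of $C_i$ and $C_1 \cup C_2$ coincide and nothing extra is needed. Once this inheritance is justified, the remainder of the argument is pure bookkeeping, and the entire content of the theorem is carried by the Milnor-number Lemma together with the clean cancellation of the $2c_1 c_2$ terms.
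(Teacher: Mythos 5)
Your proposal is correct and follows essentially the same route as the paper: the same key Lemma $\mu(C_{1}\cup C_{2}) = \mu(C_{1}) + \mu(C_{2}) + 2c_{1}c_{2} - r$, the same cancellation of the $2c_{1}c_{2}$ cross term, and the same coefficient bookkeeping, merely organized as a degree-by-degree comparison rather than a chain of equalities. The ``inheritance'' point you rightly flag --- that quasi-homogeneity of the union forces $\tau(C_{i}) = \mu(C_{i})$ for the components --- is exactly what the paper disposes of by citing \cite[Remark 2.4]{POG}.
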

\begin{proof}
Since our curve $C_{1} \cup C_{2}$ has only quasi-homogeneous singularities we have $\tau(C_{1} \cup C_{2}) = \mu(C_{1} \cup C_{2})$ and $\mu(C_{i}) = \tau(C_{i})$ for $i = 1,2$, see \cite[Remark 2.4]{POG}. We start by computations preformed on the left-hand side, we have
\begin{align*}
\mathfrak{P}(C_{1} \cup C_{2};t) = 1 + (c_{1}+c_{2}-1)t + \bigg((c_{1}+c_{2}-1)^{2} - \mu(C_{1}\cup C_{2})\bigg)t^{2} &= \\
1+(c_{1}+c_{2}-1)t + \bigg(c_{1}^{2} + c_{2}^{2}+1+2c_{1}c_{2}-2c_{1}-2c_{2} - \mu(C_{1}) - \mu(C_{2}) -2c_{1}c_{2}+r\bigg)t^{2} &= \\
1 + (c_{1}+c_{2}-1)t + \bigg((c_{1}-1)^{2} - \mu(C_{1}) + (c_{2}-1)^{2} - \mu(C_{2})+r-1\bigg)t^{2} &= \\
\mathfrak{P}(C_{1};t) + \mathfrak{P}(C_{2};t) + t-1 + (r-1)t^{2}
\end{align*}
and this completes the proof.
\end{proof}
Let us present some examples that show how to use the above technique in the case of curve arrangements with ordinary quasi-homogeneous singularities. For such arrangements we denote by $n_{i}$ the number of ordinary $i$-fold intersections. 
\begin{example}
Consider the line arrangement $C_{1}$ defined by
$$Q_{1}(x,y,z) = (x-z)(x+z)(y-z)(y+z)(y-x)(y+x)$$
and another curve $C_{2}$ given by
$$Q_{2}(x,y,z) = x^{2}+y^{2}-2z^{2}.$$
Note that $C_{1}$ has $n_{2}=3$ and $n_{3}=4$, and its Poincar\'e polynomial is
$$\mathfrak{P}(C_{1};t) = 1 + 5t + 6t^{2}.$$
The curve $C_{2}$ is just a smooth conic, so its Poincar\'e polynomial is simple, namely
$$\mathfrak{P}(C_{2};t) = 1 + t + t^{2}.$$
Next, we can compute that $r= |C_{1}\cap C_{2}|=4$. Now are in a position to use \eqref{addition}, we have
\begin{multline*}
\mathfrak{P}(C_{1} \cup C_{2};t) = \mathfrak{P}(C_{1};t) + \mathfrak{P}(C_{2};t) + t-1 + (r-1)t^{2} = \\ (1 + 5t + 6t^{2}) + (1 + t + t^{2}) + t-1 + 3t^{2}  = 1 + 7t + 10t^{2} = (1+2t)(1+5t),
\end{multline*}
so this might suggest that $C_{1}\cup C_{2}$ is free, and this is indeed the case by \cite[Theorem 1.3]{Pokora1}.
\end{example}

\begin{example}
Consider the line arrangement $C_{1}$ defined by
$$Q_{1}(x,y,z) = x(x-z)(x+z)(y-z)(y+z)(y-x)(y+x)$$
and another curve $C_{2}$ given by
$$Q_{2}(x,y,z) = x^{2}+y^{2}-2z^{2}.$$
The line arrangement $C_{1}$ consists of $7$ lines and it has $n_{2}=3$ and $n_{3}=6$. We can easily check, using \verb}SINGULAR} \cite{Singular}, that the curve $C_{1}$ is free with exponents $(3,3)$, and hence
$$\mathfrak{P}(C_{1};t) = 1 + 6t + 9t^{2}.$$
If we add $C_{2}$ to $C_{1}$, then the resulting arrangement has degree $9$, and it has the following intersections:
$$n_{2}=5, \quad n_{3}=2, \quad n_{4}=4.$$
We can calculate that $r= |C_{1}\cap C_{2}|=6$, and we use \eqref{addition}, namely
\begin{multline*}
\mathfrak{P}(C_{1} \cup C_{2};t) = \mathfrak{P}(C_{1};t) + \mathfrak{P}(C_{2};t) + t-1 + (r-1)t^{2} = \\ (1 + 6t + 9t^{2}) + (1 + t + t^{2}) + t-1 + 5t^{2}  = 1 + 8t + 15t^{2} = (1+3t)(1+5t).
\end{multline*}
By \cite[Theorem 1.3]{Pokora1}, the curve $C_{1} \cup C_{2}$ is free with exponents $(3,5)$.
\end{example}
\begin{example}[{cf. \cite[Corollary 1.6]{DIPS}}]
Consider the pencil of three lines $C_{1}$ defined by
$$Q_{1}(x,y,z) = x^{3}+y^{3}$$
and the Fermat elliptic curve $C_{2}$ given by
$$Q_{2}(x,y,z) = x^{3}+y^{3}+z^{3}.$$
It is known that every pencil of $d$ lines is free with exponents $(0, d-1)$, so in our situation we have
$$\mathfrak{P}(C_{1};t) = 1 + 2t.$$
The curve $C_{2}$ is just a smooth cubic curve, hence its Poincar\'e polynomial has the following form
$$\mathfrak{P}(C_{2};t) = 1 + 2t + 4t^{2}.$$
If we add $C_{2}$ to $C_{1}$, then the resulting arrangement has degree $6$, and it has one ordinary triple point and three singularities of type $A_{5}$ coming from three inflectional tangents, so we have $r= |C_{1}\cap C_{2} | = 3$. Now we use \eqref{addition} to compute the Poincar\'e polynomial of $C_{1} \cup C_{2}$, namely
\begin{multline*}
\mathfrak{P}(C_{1} \cup C_{2};t) = \mathfrak{P}(C_{1};t) + \mathfrak{P}(C_{2};t) + t-1 + (r-1)t^{2} = \\ (1 + 2t) + (1 + 2t + 4t^{2}) + t-1 + 2t^{2}  = 1 + 5t + 6t^{2} = (1+2t)(1+3t),
\end{multline*}
and by \cite[Corollary 1.6]{DIPS} the curve $C_{1} \cup C_{2}$ is free with exponents $(2,3)$.
\end{example}
\begin{remark}
The above examples show us the main application of the addition for Poincar\'e polynomials, namely we can use it to construct new examples of free curves by adding curves to free ones. In the above examples, we started with free line arrangements, after adding a smooth curve we easily computed Poincar\'e polynomials and checked the splitting over the rationals, which suggested us to directly verify the freeness property.
\end{remark}
Now we pass to the Betti polynomials. We have the following general result.
\begin{theorem}
In the setting as above, 
\begin{equation}
\label{key}
B_{M(C_{1} \cup C_{2})}(t) = B_{M(C_{1})}(t) + B_{M(C_{2})}(t) +t-1+(r-1)t^{2}.
\end{equation}
\end{theorem}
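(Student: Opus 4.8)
The plan is to mirror the computation carried out in the proof of the preceding theorem, but now working with the Betti polynomial in place of the Poincar\'e polynomial. The conceptual point worth isolating at the outset is that $B_{M(C)}(t)$ is defined directly through the total Milnor number $\mu(C)$, rather than through the Tjurina number $\tau(C)$. Consequently, no quasi-homogeneity hypothesis is needed here: the only input on the singularities is the finiteness $r = C_1 \cap C_2 < \infty$, which is precisely the assumption under which the crucial Lemma was established. This explains why the statement requires strictly less than its Poincar\'e counterpart.

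First I would record the invariants of the union: the degree of $C_1 \cup C_2$ is $c_1 + c_2$, and its number of irreducible components is $e_1 + e_2$. Substituting $d = c_1 + c_2$ and $e = e_1 + e_2$ into the definition $B_{M(C)}(t) = 1 + (e-1)t + ((d-1)^2 - \mu(C) - d + e)t^2$, and then feeding in the Lemma's identity $\mu(C_1 \cup C_2) = \mu(C_1) + \mu(C_2) + 2c_1 c_2 - r$, rewrites the quadratic coefficient entirely in terms of $c_1, c_2, e_1, e_2, \mu(C_1), \mu(C_2)$ and $r$. The linear coefficient needs no work, since $(e_1 - 1) + (e_2 - 1) + 1 = e_1 + e_2 - 1$, and the constant terms combine as $1 + 1 - 1 = 1$.

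Next I would expand $(c_1 + c_2 - 1)^2 = c_1^2 + c_2^2 + 2c_1 c_2 - 2c_1 - 2c_2 + 1$ so that the $2c_1 c_2$ contribution cancels exactly against the term $-2c_1 c_2$ inherited from the Lemma. Regrouping the surviving terms into the two blocks $(c_1-1)^2 - \mu(C_1) - c_1 + e_1$ and $(c_2-1)^2 - \mu(C_2) - c_2 + e_2$ then reproduces precisely the $t^2$-coefficients of $B_{M(C_1)}(t)$ and $B_{M(C_2)}(t)$, and leaves behind a residual $r - 1$, which is exactly the coefficient of $t^2$ in the correction term $t - 1 + (r-1)t^2$. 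Matching all three coefficients of $1$, $t$, and $t^2$ finishes the identity.

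The main obstacle, if one can call it that, is purely bookkeeping. The delicate point is the correction $-d + e$ inside $B_{M(C)}(t)$: for the union it contributes $-(c_1 + c_2) + (e_1 + e_2)$, and one must check that this splits cleanly as $(-c_1 + e_1) + (-c_2 + e_2)$ when redistributed across the two summands. Once that splitting is verified alongside the $2c_1 c_2$ cancellation, the remaining comparison of coefficients is an entirely routine algebraic check requiring no further geometric input.
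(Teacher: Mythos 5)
Your proposal is correct and follows essentially the same route as the paper: substitute the degree $c_1+c_2$, component count $e_1+e_2$, and the Lemma's identity $\mu(C_1\cup C_2)=\mu(C_1)+\mu(C_2)+2c_1c_2-r$ into the definition of $B_{M(C)}(t)$, expand $(c_1+c_2-1)^2$ to cancel the $2c_1c_2$ terms, and regroup to match coefficients. Your opening remark that the quasi-homogeneity hypothesis is unnecessary here (since the Betti polynomial is built from $\mu$ rather than $\tau$) is accurate and consistent with the paper's formulation of the theorem.
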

\begin{proof}
We start with the left-hand side, one has
\begin{align*}
\
B_{M(C_{1} \cup C_{2})}(t) &=  \\1 + (e_{1}+e_{2}-1)t +  \bigg((c_{1}+c_{2}-1)^{2} - \mu(C_{1}\cup C_{2}) - (c_{1}+c_{2})+(e_{1}+e_{2})\bigg)t^{2} & =  \\
1 + (e_{1}+e_{2}-1)t + \bigg((c_{1}+c_{2}-1)^{2} - \mu(C_{1}) - \mu(C_{2}) -2c_{1}c_{2}+r - (c_{1}+c_{2})+(e_{1}+e_{2})\bigg)t^{2} & = \\
1+(e_{1}+e_{2}-1)t + \bigg( (c_{1}-1)^{2} + (c_{2}-1)^{2} -\mu(C_{1}) - \mu(C_{2})- c_{1} - c_{2} + e_{1} + e_{2} +r-1\bigg)t^{2} & = \\
B_{M(C_{1})}(t) + B_{M(C_{2})}(t) + (t-1) + (r-1)t^{2}
\end{align*}
and this completes the proof.
\end{proof}
\begin{remark}
If we evaluate the identity \eqref{key} by taking $t=-1$, we get a relation involving the corresponding Euler numbers, namely
$$e(M(C_{1} \cup C_{2})) = e(M(C_{1})) + e(M(C_{2}))+r-3.$$
\end{remark}
\section*{Acknowledgments}
I am indebted to Alex Dimca for all discussions regarding the content of this paper. I would also like to thank an anonymous referee for comments that allowed me to improve the paper.

Piotr Pokora is supported by the National Science Centre (Poland) Sonata Bis Grant  \textbf{2023/50/E/ST1/00025}. For the purpose of Open Access, the author has applied a CC-BY public copyright licence to any Author Accepted Manuscript (AAM) version arising from this submission.

\vskip 0.5 cm
\bigskip

Piotr Pokora,
Department of Mathematics,
University of the National Education Commission Krakow,
Podchor\c a\.zych 2,
PL-30-084 Krak\'ow, Poland. \\
\nopagebreak
\textit{E-mail address:} \texttt{piotr.pokora@uken.krakow.pl}

\begin{thebibliography}{000}
\bibitem{Singular}
W.~Decker, G.-M. Greuel, G.~Pfister, and H.~Sch\"onemann,
\newblock {\sc Singular} {4-1-1} --- {A} computer algebra system for polynomial computations. \newblock \url{http://www.singular.uni-kl.de}, 2018.
\bibitem{Dimca}
A. Dimca,  \textit{Hyperplane arrangements. An introduction}. Universitext. Cham: Springer (ISBN 978-3-319-56220-9/pbk; 978-3-319-56221-6/ebook). xii, 200 p. (2017).
 \bibitem{POG}
A. Dimca, On plus-one generated curves arising from free curves. \textit{Bull. Math. Sci.} \textbf{14(3)}: Art. Id. 2450007 (2024).
\bibitem{Dimca1}
A. Dimca, Some remarks on plane curves related to freeness. \textbf{arXiv:2501.01807}.
\bibitem{DIPS}
A. Dimca, G. Ilardi, P. Pokora, G. Sticlaru, Construction of free curves by adding lines to a given curve. \textit{Result. Math.} \textit{79(1)}: Paper No. 11, 31 p. (2024).
\bibitem{duP}
A. Du Plessis and C. T. C. Wall, Application of the theory of the discriminant to highly singular plane curves. \textit{Math. Proc. Camb. Philos. Soc.} \textbf{126(2)}: 259 -- 266 (1999).
\bibitem{Pokora1}
P. Pokora, Freeness of arrangements of lines and one conic with ordinary quasi-homogeneous singularities. \textit{Taiwanese J. Math.} \textbf{29(6)}: 1651 -- 1666 (2025).
\bibitem{Pokora}
P. Pokora, On Poincar\'e polynomials for plane curves with quasi-homogeneous singularities. \textit{Bull. Lond. Math. Soc.} \textbf{57(8)}: 2549 -- 2560 (2025).
\bibitem{KS1}
K. Saito, Theory of logarithmic differential forms and logarithmic vector fields. \textit{J. Fac. Sci. Univ. Tokyo Sect. IA Math} \textbf{27(2)}: 265 -- 291 (1980).

\bibitem{Wall}
C. T. C. Wall, \textit{Singular points of plane curves}. 
London Mathematical Society Student Texts 63. Cambridge: Cambridge University Press (ISBN 0-521-83904-1/hbk; 0-521-54774-1/pbk). xi, 370 p. (2004).


\end{thebibliography}
\end{document}